\newcommand{\N}{\mathbb N}
\newcommand{\Z}{\mathbb Z}
\newcommand{\R}{\mathbb R}
\newcommand{\C}{\mathbb C}
\newcommand{\HH}{{\cal H}}
\newcommand{\p}{{\cal P}}
\newcommand{\PP}{{\mathbb P}}
\newcommand{\T}{{\cal T}}
\newcommand{\U}{{\cal U}}
\newcommand{\tn}{\mathbf{T}_n}
\newcommand{\re}{\operatorname{Re}}
\newcommand{\ii}{{\operatorname{i}}}
\newcommand{\sn}{\operatorname{sn}}
\newcommand{\cn}{\operatorname{cn}}
\newcommand{\dn}{\operatorname{dn}}
\newcommand{\ov}{\overline}
\newcommand{\ta}{\tilde{a}}
\newcommand{\trho}{\tilde{\varrho}}
\newcommand{\iK}{\ii{K}'}
\begin{document}

%----- Title -----%

\title{A Density Result Concerning Inverse Polynomial Images\footnote{will be published in: Proceedings of the AMS (2013).}}
\author{Klaus Schiefermayr\footnote{University of Applied Sciences Upper Austria, School of Engineering and Environmental Sciences, Stelzhamerstr.\,23, 4600 Wels, Austria, \textsc{klaus.schiefermayr@fh-wels.at}}}
\date{}
\maketitle

\theoremstyle{plain}
\newtheorem{theorem}{Theorem}
\newtheorem{lemma}{Lemma}
\theoremstyle{definition}
\newtheorem*{remark}{Remark}

\begin{abstract}
In this paper, we consider polynomials of degree $n$, for which the inverse image of $[-1,1]$ consists of two Jordan arcs. We prove that the four endpoints of these arcs form an ${\cal O}(1/n)$-net in the complex plane.
\end{abstract}

\noindent\emph{Mathematics Subject Classification (2000):} 30C10, 30E10, 33E05, 41A50

\noindent\emph{Keywords:} Density result, Inverse polynomial image, Jacobian elliptic function, Jordan arc

%-------------------------------------%
\section{Introduction and Main Result}
%-------------------------------------%

%----- Text -----%

Let $\PP_n$ be the set of all polynomials of degree $n$ with complex coefficients. For a polynomial $\T_n\in\PP_n$, consider the inverse image of $[-1,1]$ defined by
\begin{equation}
\T_n^{-1}([-1,1]):=\bigl\{z\in\C:\T_n(z)\in[-1,1]\bigr\}.
\end{equation}
Inverse polynomial images are interesting for instance in approximation theory, since each polynomial $\T_n\in\PP_n$ is (suitable normed) the Chebyshev polynomial, i.e.\ the minimal polynomial with respect to the supremum norm, of degree $n$ on its inverse image $\T_n^{-1}([-1,1])$, see \cite{KaBo-1994}, \cite{OPZ-1996}, \cite{Peh-1996},  or \cite{FiPeh-2001}.

In the following, we will need the notion of Jordan arcs. A set $\bigl\{\gamma(t)\in\ov{\C}:t\in[0,1]\bigr\}$ is called a \emph{Jordan arc} if $\gamma:[0,1]\to\ov{\C}$ is continuous and $\gamma:[0,1)\to\ov{\C}$ is injective.

It is well known that $\T_n^{-1}([-1,1])$ is the union of $n$ Jordan arcs. The number of Jordan arcs can be reduced for some polynomials $\T_n$: the inverse image $\T_n^{-1}([-1,1])$ consists of one Jordan arc if and only if $\T_n(z)=T_n(az+b)$, where $T_n$ is the classical Chebyshev polynomial of the first kind, i.e.\ $T_n(z)=\cos(n\arccos{z})$, see \cite[Cor.\,1]{Sch-2012a}. In this case the inverse image is an interval in the complex plane (which of course can be seen as the union of $n$ intervals). In other words, the case of one Jordan arc is trivial.

In this note, we are interested in polynomials with an inverse image consisting of \emph{two Jordan arcs}. Given four pairwise distinct points $a_1,a_2,a_3,a_4\in\C$ in the complex plane, define
\begin{equation}\label{H}
\HH_4(z):=(z-a_1)(z-a_2)(z-a_3)(z-a_4).
\end{equation}
Then the following characterization theorem holds, see \cite[Thm.\,1]{Sch-2012a} or \cite[Thm.\,3]{PehSch-2004}.

%----- Theorem: Inverse image consists of 2 Jordan arcs  -----%

\begin{theorem}\label{Thm-TwoArcs}
Let $\T_n(z)=\tau{z}^n+\ldots\in\PP_n$ be any polynomial of degree $n$. Then $\T_n^{-1}([-1,1])$ consists of two (but not less than two) Jordan arcs with endpoints $a_1,a_2,a_3,a_4$ if and only if $\T_n^2-1$ has exactly $4$ pairwise distinct zeros $a_1,a_2,a_3,a_4$ of odd multiplicity, i.e., if and only if $\T_n$ satisfies a polynomial equation of the form
\begin{equation}\label{TU}
\T_n^2(z)-1=\HH_4(z)\,\U_{n-2}^2(z)
\end{equation}
with $\U_{n-2}(z)=\tau{z}^{n-2}+\ldots\in\PP_{n-2}$ and $\HH_4$ given in \eqref{H}. In this case, the tuple\\ $(a_1,a_2,a_3,a_4)\in\C^4$ is called a $\tn$-tuple.
\end{theorem}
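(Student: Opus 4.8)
The plan is to turn the statement into combinatorics of the planar set $E:=\T_n^{-1}([-1,1])$: describe $E$ near each of its points, show that it carries no cycle, and then count arcs by Euler's theorem. First I would dispose of the (routine) equivalence of the two forms of the right-hand side. If $\T_n^2-1$ has exactly four pairwise distinct zeros $a_1,\dots,a_4$ of odd multiplicity, split off one factor $z-a_i$ from each of them and gather everything that remains — the zeros of even multiplicity together with the reduced even powers $(z-a_i)^{2k_i}$ — into a square $V(z)^2$; then $\T_n^2-1=\HH_4(z)\,V(z)^2$, and comparing degrees forces $\deg V=n-2$, while comparing leading coefficients forces, after possibly replacing $V$ by $-V$, leading coefficient $\tau$, i.e.\ $V=\U_{n-2}$. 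The converse is immediate. So it remains to connect the arc structure of $E$ with the odd-multiplicity zeros of $\T_n^2-1$.

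Next I would describe $E$ near each of its points using the local normal form of $\T_n$. If $\T_n(z_0)\in(-1,1)$ and $\T_n$ has local degree $\ell$ at $z_0$, then $[-1,1]$ is a segment through $\T_n(z_0)$ and $E$ near $z_0$ is $2\ell$ analytic arcs issuing from $z_0$; if $\T_n(z_0)=\pm1$ and $z_0$ is a zero of $\T_n\mp1$ of multiplicity $k$, then $E$ near $z_0$ is $k$ such arcs. Hence $E$ is a finite graph embedded in $\C$ with no isolated vertex, and its vertices of odd valence are exactly the zeros of $\T_n^2-1$ of odd multiplicity: all points over $(-1,1)$ have even valence $2\ell$ and all even-multiplicity zeros over $\pm1$ have even valence, while the simple zeros are the leaves and the odd-multiplicity zeros of order $\ge3$ are the odd branch points; their number is even since $\deg(\T_n^2-1)=2n$.

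The crucial geometric step is that $E$ contains no Jordan curve, equivalently $\ov\C\setminus E$ is connected, so that $E$ is a forest. Indeed, if a Jordan curve $\gamma\subseteq E$ bounded a Jordan domain $G$, then for every $w_0\notin[-1,1]$ the loop $\T_n\circ\gamma$ lies in the segment $[-1,1]$, hence is null-homotopic in $\C\setminus\{w_0\}$ and has winding number $0$ about $w_0$; by the argument principle $\T_n$ would then omit $w_0$ on $G$, forcing $\T_n(G)\subseteq[-1,1]$, which is impossible for the nonempty open set $\T_n(G)$.

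Knowing $E$ is a forest, Euler's theorem applied to each tree component decomposes the edge set of $E$ into simple arcs, the least possible number of arcs being half the number of odd-valence vertices — and no fewer, since in any such edge-decomposition into $j$ arcs every odd-valence vertex is the endpoint of an odd, hence positive, number of the arcs, so $2j$ is at least the number of odd-valence vertices; moreover in a minimal decomposition the $2j$ arc endpoints are exactly the odd-valence vertices. Therefore $E$ consists of two (and not fewer) Jordan arcs precisely when it has four odd-valence vertices, i.e.\ precisely when $\T_n^2-1$ has four distinct odd-multiplicity zeros; those zeros are then exactly the four endpoints $a_1,\dots,a_4$, which together with the first step gives $\T_n^2-1=\HH_4\,\U_{n-2}^2$ with $\HH_4=\prod_{i=1}^4(z-a_i)$, and conversely. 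I expect the forest property to be the main obstacle: it is the one point that needs a genuine analytic argument rather than bookkeeping, and it is exactly what excludes sets (such as ``double stars'') which, although a union of two Jordan arcs, require more arcs in an edge-disjoint decomposition and in fact correspond to more than four odd-multiplicity zeros — so the second delicate point is to keep ``union of two arcs'' and ``edge-decomposition into two arcs'' carefully apart.
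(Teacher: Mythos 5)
The paper does not prove this theorem at all; it is quoted from \cite{Sch-2012a} and \cite{PehSch-2004}, so there is no in-paper argument to compare against. Your route — local normal forms of $\T_n$ to read off the valences, the argument-principle/open-mapping argument to show $\T_n^{-1}([-1,1])$ contains no Jordan curve and is therefore a forest, and then the Euler-type count that a forest decomposes into $j$ edge-disjoint simple arcs with $2j$ equal to the number of odd-valence vertices — is the natural proof, and each of those three steps is carried out correctly (including the observation that in a minimal decomposition the arc endpoints are exactly the odd-valence vertices, which is what identifies them with the odd-multiplicity zeros of $\T_n^2-1$ and hence yields \eqref{TU}).

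The one genuine flaw is in your last sentences. You assert that the forest property is ``exactly what excludes'' the double-star configurations; it is not, because a double star (the letter $H$: four leaves joined in pairs to two valence-$3$ vertices connected by a crossbar) \emph{is} a tree, so it survives your no-cycle argument untouched. It is also realizable: the $H$-tree is a plane bipartite tree with valence lists $(1,1,3)$ over $+1$ and $(3,1,1)$ over $-1$, so a degree-$5$ Shabat polynomial has it as $\T_5^{-1}([-1,1])$; this set is a union of two Jordan arcs (each running leaf--crossbar--leaf) with four distinct endpoints, yet $\T_5^2-1$ has six distinct odd-multiplicity zeros. Consequently, if ``consists of two Jordan arcs'' is read as ``is a set-theoretic union of two Jordan arcs,'' the \emph{only if} direction is simply false, and no argument can rescue it. What actually closes the gap is the convention (implicit in the sources) that the two arcs form an edge-disjoint decomposition whose endpoint set is $\{a_1,a_2,a_3,a_4\}$: then every odd-valence vertex must be an arc endpoint, so there are at most four of them, and your counting argument finishes both directions. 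So you have flagged the right delicate point but resolved it by appeal to the wrong mechanism; you need to state the decomposition convention explicitly and derive the exclusion of the $H$-configurations from it, not from acyclicity.
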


%----- Text -----%

Condition \eqref{TU} implies that $\T_n^{-1}([-1,1])$ consists of two Jordan arcs, which are not necessarily analytic. Concerning the minimum number of \emph{analytic} Jordan arcs, we refer to \cite[Thm.\,3]{Sch-2012a}.

Now, we are able to state the main result. Roughly spoken it says that all $\tn$-tuples $(a_1,a_2,a_3,a_4)$ form an ${\cal O}(\frac{1}{n})$-net in the complex plane.

%----- Theorem: Density of two arcs -----%

\begin{theorem}\label{MainThm}
Let $a_1,a_2,a_3,a_4\in\C$ be four paiwise distinct points in the complex plane. Then there exist $\ta_2,\ta_3\in\C$ such that $(a_1,\ta_2,\ta_3,a_4)$ is a $\tn$-tuple and
\[
|a_2-\ta_2|\leq\frac{C_1}{n} \qquad \text{and} \qquad |a_3-\ta_3|\leq\frac{C_2}{n}
\]
holds for $n\geq{N}$, where $C_1,C_2,N$ depend only on $a_1,a_2,a_3,a_4$ but do not depend on $n$.
\end{theorem}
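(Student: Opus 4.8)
\emph{Plan.} The idea is to recast the algebraic condition \eqref{TU} as a \emph{torsion condition} on an elliptic curve and then to use that the $n$-torsion points of an elliptic curve form an ${\cal O}(1/n)$-net.

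\emph{Step 1: a period reformulation.} For pairwise distinct $b_1,b_2,b_3,b_4\in\C$ let $\Sigma$ be the compact Riemann surface of $w=\sqrt{(z-b_1)(z-b_2)(z-b_3)(z-b_4)}$, a torus, with holomorphic differential $\omega=\dd z/\sqrt{\prod_j(z-b_j)}$, a fixed homology basis, period lattice $\Lambda=\Z P_1+\Z P_2$ (so $P_2/P_1\notin\R$), hyperelliptic involution $\iota$, and the two points $\infty_\pm$ above $z=\infty$. Put $\tau_\infty:=\int_{\infty_-}^{\infty_+}\omega$. I claim: $(b_1,b_2,b_3,b_4)$ is a $\tn$-tuple if and only if $n\tau_\infty\in\Lambda$. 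Indeed, if \eqref{TU} holds (with $b_j$ in place of $a_j$), then $G:=\T_n+\U_{n-2}\sqrt{\HH_4}$ satisfies $G\cdot(G\circ\iota)=\T_n^2-\HH_4\U_{n-2}^2=1$, so $G$ is meromorphic on $\Sigma$ with divisor $n\,\infty_+-n\,\infty_-$, and Abel's theorem gives $n\tau_\infty\in\Lambda$. Conversely, if $n\tau_\infty\in\Lambda$, Abel's theorem yields a $G$ with that divisor; after rescaling so that $G\cdot(G\circ\iota)\equiv1$, the functions $\T_n:=\tfrac12(G+G\circ\iota)$ and $\U_{n-2}:=(G-G\circ\iota)/(2\sqrt{\HH_4})$ are polynomials of degree $n$ and $n-2$ with common leading coefficient obeying \eqref{TU} (the pole orders at $z=\infty$ being $n$ and $n-2$), and Theorem \ref{Thm-TwoArcs} makes $(b_1,b_2,b_3,b_4)$ a $\tn$-tuple. (Equivalently, after bringing $\Sigma$ to Jacobi form this condition reads $u_\infty\in\tfrac1n(2K\,\Z+\iK\,\Z)$, where $\sn(u_\infty,k)$ is the image of $z=\infty$ and $k$ is the modulus determined by the cross-ratio of the $b_j$.)

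\emph{Step 2: hitting a torsion point.} Fix the given $a_1,a_4$ and regard $P_1,P_2,\tau_\infty$ as functions of $(a_2,a_3)$ near the given target $p:=(a_2,a_3)$; choosing a continuously varying homology basis and path $\infty_-\rightsquigarrow\infty_+$ makes them holomorphic there. Write $P_i^0,\tau_\infty^0$ for their values at $p$. Since $P_1^0,P_2^0$ are $\R$-linearly independent, $\tfrac1n\Lambda^0$ (with $\Lambda^0=\Z P_1^0+\Z P_2^0$) is a net of mesh $\le C_0/n$, so there are integers $k_1=k_1(n),k_2=k_2(n)$ with $|\tfrac{k_1}{n}P_1^0+\tfrac{k_2}{n}P_2^0-\tau_\infty^0|\le C_0/n$; if $(\alpha,\beta)\in\R^2$ denotes the unique solution of $\alpha P_1^0+\beta P_2^0=\tau_\infty^0$, then $k_i/n=\alpha_i+{\cal O}(1/n)$, in particular bounded. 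By Step 1 it now suffices to find $(a_2,a_3)$ near $p$ with $n\tau_\infty=k_1P_1+k_2P_2$, i.e.\ a zero near $p$ of the holomorphic function $g_n:=\tau_\infty-\tfrac{k_1}{n}P_1-\tfrac{k_2}{n}P_2$ on a neighbourhood of $p$ in $\C^2$.

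\emph{Step 3: a quantitative implicit function argument.} One has $|g_n(p)|\le C_0/n$, and $g_n\to g_\infty:=\tau_\infty-\alpha P_1-\beta P_2$ uniformly with all derivatives near $p$, where $g_\infty(p)=0$. If the differential $\dd g_\infty(p)$ is a nonzero $\C$-linear functional on $\C^2$ — equivalently, the two numbers $\partial_{a_j}\tau_\infty(p)-\alpha\,\partial_{a_j}P_1(p)-\beta\,\partial_{a_j}P_2(p)$ ($j=2,3$) do not both vanish — then $|\dd g_n(p)|\ge c>0$ for all large $n$, and restricting $g_n$ to the complex line through $p$ along which this derivative is nonzero and applying Rouch\'e's theorem (with uniform bounds on the second derivatives) produces a zero $(\ta_2,\ta_3)$ of $g_n$ with $|(\ta_2,\ta_3)-p|\le C\,|g_n(p)|={\cal O}(1/n)$. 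By Step 1 this $(a_1,\ta_2,\ta_3,a_4)$ is a $\tn$-tuple (its four entries being distinct once $n$ is large), which gives the theorem with suitable $C_1,C_2,N$. The main obstacle is exactly the non-vanishing of $\dd g_\infty(p)$: its components are the second-kind periods $\partial_{a_j}\tau_\infty=\tfrac12\int_{\infty_-}^{\infty_+}\frac{\dd z}{(z-a_j)\sqrt{\HH_4(z)}}$ together with the corresponding cycle integrals inside $\partial_{a_j}P_i$, and one must rule out that the combination $\dd\tau_\infty-\alpha\,\dd P_1-\beta\,\dd P_2$ vanishes in both the $a_2$- and the $a_3$-direction. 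I expect this to be carried out by explicit computation in Jacobi-elliptic-function coordinates, where $\tau_\infty$, $K$, $K'$, the modulus $k$ and their derivatives with respect to the branch points all have closed forms in terms of $\sn,\cn,\dn$, combined with the Legendre (bilinear) relations; this is the one place where the concrete structure of the problem, rather than general principles, does the work.
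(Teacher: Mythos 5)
Your Step 1 is a correct and classical reformulation: it is essentially the paper's Theorem~\ref{ChThm}, which states the same torsion condition concretely in Jacobi coordinates as $\varrho\in\tfrac{1}{n}\left(K\Z+\ii K'\Z\right)$ (modulo the usual bookkeeping about half-periods and the index of the lattice, which you should still pin down). The genuine problem is in Step 3, and you have named it yourself without resolving it: the entire argument rests on the non-vanishing of $\dd g_\infty(p)$, i.e.\ on the claim that $\dd\tau_\infty-\alpha\,\dd P_1-\beta\,\dd P_2$ does not annihilate both the $a_2$- and the $a_3$-direction at $p$. ``I expect this to be carried out by explicit computation'' is not a proof, and this is not a removable technicality: if that differential vanished, Rouch\'e would only locate a zero of $g_n$ at distance ${\cal O}(n^{-1/2})$ (or worse) from $p$, and the ${\cal O}(1/n)$ rate --- which is the whole content of the theorem --- would be lost. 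As written, the proposal is an unfinished program with the hard step outstanding.

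The paper's proof shows both that this transversality question can be avoided and how to close your gap. Instead of perturbing $(a_2,a_3)$ freely in $\C^2$, it perturbs only along the one-complex-parameter family on which the cross-ratio \eqref{k} --- hence the modulus $k$, hence $K$, $K'$ and the normalized period lattice --- is held fixed. On that family the admissible set is exactly the fixed lattice $\tfrac{1}{n}\left(K\Z+\ii K'\Z\right)$ for the single coordinate $\varrho$ of \eqref{sn}, so one simply replaces $\varrho$ by a nearby lattice point $\trho$ at cost $|\varrho-\trho|\le(|K|+|K'|)/n$, and the perturbed endpoints are given \emph{in closed form} by $\ta_2=a_4-(a_4-a_1)\sn^2(\trho)$ and the M\"obius expression \eqref{a3-t} for $\ta_3$; the ${\cal O}(1/n)$ bounds then follow from the addition formula for $\sn(u)-\sn(v)$, Lemma~\ref{lem}, and uniform bounds on $\sn,\cn,\dn$ on a small closed neighbourhood of $\varrho$ avoiding the poles. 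No implicit function theorem and no nondegeneracy of period derivatives is needed. If you wish to keep your framework, the repair is to restrict $g_n$ to precisely this constant-cross-ratio line through $p$ and to work with the normalized quantity $\tau_\infty/P_1$ against the fixed lattice $\Z+(P_2/P_1)\Z$ (the ratio $P_2/P_1$ depends only on the cross-ratio): up to an affine change this normalized quantity is the coordinate $\varrho$, and its derivative along the family is nonzero because $\dd a_2/\dd\varrho=-2(a_4-a_1)\sn\varrho\cn\varrho\dn\varrho\neq0$ when the $a_j$ are pairwise distinct. That one-dimensional reduction is exactly the idea your write-up is missing.
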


%----- Remark -----%

\begin{remark}
\begin{enumerate}
\item The proof of Theorem\,\ref{MainThm} is given in the next section.
\item The values $C_1$ and $C_2$ can be expressed using Jacobian elliptic functions, see \eqref{C1} and \eqref{C2} in the proof of Theorem\,\ref{MainThm}, respectively.
\item For the real case, i.e.\ $a_1,a_2,a_3,a_4\in\R$, the density result of Theorem\,\ref{MainThm} has been proved long time ago by Achieser\,\cite{Achieser-1932}.
\item For the special case $a_1,a_2\in\R$, $a_3\in\C\setminus\R$, $a_4=\overline{a}_3$, the density result of Theorem\,\ref{MainThm} is proved in \cite{Sch-2009}.
\item For the case of $\ell$ real intervals, a similar density result to that of Theorem\,\ref{MainThm} has been proved independently about the same time by Bogatyrev\,\cite{Bogatyrev-1999}, Peherstorfer\,\cite{Peh-2001}, and Totik\,\cite{Totik-2001}.
\end{enumerate}
\end{remark}

%---------------------------------%
\section{Proof of the Main Result}
%---------------------------------%

%----- Text  -----%

The proof of the main result is managed with the help of the characterization of a $\tn$-tuple using Jacobian elliptic and theta functions, see \cite{PehSch-2004}. To this end, let us briefly recall some definitions. For an introduction to elliptic functions and integrals, we refer to \cite{BF} and \cite{Lawden}.

Let $k\in{D}_k$, $D_k$ defined in \eqref{Dk}, be the modulus of the Jacobian elliptic functions $\sn(u)=\sn(u,k)$, $\cn(u)=\cn(u,k)$, and $\dn(u)=\dn(u,k)$. Let $k'$, defined by ${k'}^2:=1-k^2$, be the complementary modulus and let $K=K(k)$ the complete elliptic integral of the first kind and let $K'=K'(k):=K(k')$.

Let $a_1,a_2,a_3,a_4\in\C$ be four pairwise distinct complex points and define the modulus $k$ by
\begin{equation}\label{k}
k^2:=\frac{(a_4-a_1)(a_3-a_2)}{(a_4-a_2)(a_3-a_1)}.
\end{equation}
By optionally exchanging some of the $a_i$'s, it is always possible to get a tuple $(a_1,a_2,a_3,a_4)$ which satisfies
\begin{equation}\label{ineq-a1a2a3a4}
|a_4-a_1|\cdot|a_3-a_2|\leq|a_4-a_2|\cdot|a_3-a_1|,
\end{equation}
i.e., $|k^2|\leq1$, and for which $k^2\notin\R^-$. Obviously $k^2=0$ if and only if $(a_1=a_4 \vee a_2=a_3)$ and $k^2=1$ if and only if $(a_1=a_2\vee{a}_3=a_4)$. In order to get $k$, we choose that branch of the square root in \eqref{k}, for which $\re(k)>0$. Therefore, we have to consider the case
\begin{equation}\label{Dk}
k\in{D}_k:=\bigl\{u\in\C:|u|\leq1,\re(u)>0,u\neq1\bigr\}
\end{equation}
in the following. For $k\in{D}_k$, the functions $K=K(k)$ and $K'=K'(k)$ are single valued (for a detailed discussion see \cite[Sect.\,8.12]{Lawden}). For the complementary modulus $k'$, by \eqref{k} and relation ${k'}^2=1-k^2$, we get
\begin{equation}\label{k'}
{k'}^2=\frac{(a_4-a_3)(a_2-a_1)}{(a_4-a_2)(a_3-a_1)}.
\end{equation}
Further, we will need the function $\sn^2(u)$, which is an even elliptic function of order $2$ with fundamental periods $2K$ and $2\ii K'$, with a double zero at $0$ and a double pole at $\ii{K}'$. Further, for $k\in{D}_k$, let
\begin{equation}\label{P}
{\cal{P}}:=\bigl\{\mu{K}+\ii\mu'K':0\leq\mu,\mu'\leq1\vee(0<\mu<1\wedge-1<\mu'<0)\bigr\}
\end{equation}
be a ``half'' period parallelogram of $\sn^2(u)$ with respect to the modulus $k$. Note that $\iK/K\notin\R$. By the above mentioned properties of $\sn^2(u)$, the mapping $\sn^2:{\cal{P}}\to\ov\C$, $u\mapsto\sn^2(u)$, is bijective, hence for given $a_1,a_2,a_3,a_4\in\C$, there exists a unique $\varrho=\lambda{K}+\ii\lambda'K'\in\p$ with $\lambda,\lambda'\in\R$ such that
\begin{equation}\label{sn}
\sn^2(\varrho)=\sn^2(\lambda{K}+\ii\lambda'K')=\frac{a_4-a_2}{a_4-a_1}.
\end{equation}
Note that $\varrho=0,K,K+\iK,\iK$ is equivalent to $\sn^2(\varrho)=0,1,1/k^2,\infty$, respectively, and that $(a_4-a_2)/(a_4-a_1)=0,1,1/k^2,\infty$ is posssible only if $a_4=a_2$, $a_2=a_1$, $a_2=a_1$, $a_4=a_1$, respectively. Thus, since $a_1,a_2,a_3,a_4$ are pairwise distinct, we have $\varrho\notin\{0,K,K+\iK,\iK\}$.

In the following characterization theorem\,\cite[Thm.\,7]{PehSch-2004}, a necessary and sufficient condition is given such that $(a_1,a_2,a_3,a_4)$ is a $\tn$-tuple.

%----- Characterization Theorem -----%

\begin{theorem}\label{ChThm}
Let $n\in\N$, let $a_1,a_2,a_3,a_4\in\C$ be pairwise distinct and satisfy \eqref{ineq-a1a2a3a4}, and let $k\in{D}_k$ and $\varrho\in\p$ be defined by \eqref{k} and \eqref{P}, respectively. Then $(a_1,a_2,a_3,a_4)$ is a $\tn$-tuple if and only if $\varrho$ is of the form
\begin{equation}\label{Rho}
\varrho=\tfrac{m}{n}K+\ii\,\tfrac{m'}{n}K',\quad\text{where}\quad{m},m'\in\Z.
\end{equation}
\end{theorem}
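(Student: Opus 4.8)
The plan is to lift the problem to the elliptic curve $\mathcal{R}\colon w^2=\HH_4(z)$ and to rewrite the defining relation \eqref{TU} of a $\tn$-tuple as an Abel (divisor) condition, which the Jacobi uniformization behind \eqref{k} and \eqref{sn} then turns verbatim into \eqref{Rho}. First I would record that uniformization explicitly. The M\"obius map
\[
N(z):=\frac{(z-a_1)(a_4-a_2)}{(z-a_2)(a_4-a_1)}
\]
sends $(a_1,a_2,a_3,a_4)$ to $(0,\infty,1/k^2,1)$, which are exactly the four critical values of $\sn^2(\cdot,k)$ for the modulus $k$ of \eqref{k}. Hence $z=N^{-1}(\sn^2(u))$ identifies $\mathcal{R}$ with the torus $\C/\Lambda$, $\Lambda:=2K\Z+2\ii K'\Z$, carrying the branch points $a_1,a_2,a_3,a_4$ to the half-periods $0,\ii K',K+\ii K',K$ and the hyperelliptic involution to $u\mapsto-u$. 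Since $N(\infty)=(a_4-a_2)/(a_4-a_1)=\sn^2(\varrho)$ by \eqref{sn}, the two points of $\mathcal{R}$ over $z=\infty$ are precisely $u=\varrho$ and $u=-\varrho$.

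For the direction ``$\Rightarrow$'', I would assume \eqref{TU} and form on $\mathcal{R}$ the (single-valued) Akhiezer-type function $\Psi:=\T_n+\U_{n-2}\sqrt{\HH_4}$ together with its conjugate $\ov\Psi:=\T_n-\U_{n-2}\sqrt{\HH_4}$. Then $\Psi\,\ov\Psi=\T_n^2-\HH_4\U_{n-2}^2\equiv1$, so $\Psi$ has neither zeros nor poles at finite points, while the common leading coefficient $\tau$ forces $\Psi\sim2\tau z^n$ at one of the points $u=\pm\varrho$ and, through $\ov\Psi$, a zero of order $n$ at the other. Thus the divisor of $\Psi$ is $n$ times one of $\pm\varrho$ minus $n$ times the other, and Abel's theorem on $\C/\Lambda$ (a degree-zero divisor is principal exactly when its points sum to $0$ in $\C/\Lambda$) shows that such a $\Psi$ exists iff $2n\varrho\in\Lambda$, equivalently $n\varrho\in K\Z+\ii K'\Z$, which is precisely \eqref{Rho}.

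For the converse ``$\Leftarrow$'', \eqref{Rho} gives $2n\varrho\in\Lambda$, so Abel's theorem furnishes a meromorphic $\Psi$ on $\mathcal{R}$, unique up to a constant, with a pole of order $n$ at $u=\varrho$, a zero of order $n$ at $u=-\varrho$, and no other zeros or poles. The involution $\sigma\colon u\mapsto-u$ sends $\Psi$ to $\sigma^*\Psi$ with the opposite divisor, so $\Psi\cdot\sigma^*\Psi$ has trivial divisor and is a nonzero constant; after rescaling I may take $\Psi\,\sigma^*\Psi=1$ and set $\ov\Psi:=\sigma^*\Psi$. Then $\T_n:=\tfrac12(\Psi+\ov\Psi)$ is $\sigma$-invariant with poles only over $z=\infty$, hence a polynomial in $z$, and $\U_{n-2}:=(\Psi-\ov\Psi)/(2\sqrt{\HH_4})$ is likewise a polynomial; weighing the order-$n$ poles of $\Psi,\ov\Psi$ against the simple pole of $z$ and the double pole of $\sqrt{\HH_4}$ over $z=\infty$ fixes their degrees at $n$ and $n-2$. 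Since $\T_n^2-\HH_4\U_{n-2}^2=\Psi\,\ov\Psi=1$ and $\HH_4$ contributes the simple factors $z-a_i$ while $\U_{n-2}^2$ is a square, $a_1,a_2,a_3,a_4$ are exactly the odd-order zeros of $\T_n^2-1$, and Theorem\,\ref{Thm-TwoArcs} identifies $(a_1,a_2,a_3,a_4)$ as a $\tn$-tuple.

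The main obstacle is this reconstruction: Abel's theorem produces $\Psi$ only as an abstract meromorphic function on the torus, and the real work is to show that $\tfrac12(\Psi\pm\ov\Psi)$ descend to genuine \emph{polynomials} of the \emph{exact} degrees $n$ and $n-2$, not merely rational functions of $z$. This is exactly where the explicit identification of the points over $z=\infty$ with $u=\pm\varrho$, and the pole orders of $z$ and $\sqrt{\HH_4}$ there, are indispensable; the very same identification is what converts the abstract torsion relation $2n\varrho\in\Lambda$ into the concrete form \eqref{Rho} in both directions, so pinning it down cleanly is the heart of the proof.
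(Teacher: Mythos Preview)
The paper does not prove Theorem~\ref{ChThm} at all; it is quoted verbatim from \cite[Thm.\,7]{PehSch-2004}, and the only hint about the argument there is the remark that $\T_n$, $\U_{n-2}$, $\HH_4$ admit explicit representations ``with a certain conformal mapping and with the help of Jacobi's elliptic and theta functions''. So there is no in-paper proof to compare your proposal against.

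That said, your Abel--Jacobi argument is correct and is precisely the conceptual skeleton that the explicit theta-function construction in \cite{PehSch-2004} fleshes out. The M\"obius map $N$ does send $(a_1,a_2,a_3,a_4)$ to $(0,\infty,1/k^2,1)$, the resulting uniformization $z=N^{-1}(\sn^2 u)$ identifies $\mathcal R$ with $\C/\Lambda$, $\Lambda=2K\Z+2\ii K'\Z$, and \eqref{sn} pins the two sheets over $z=\infty$ to $u=\pm\varrho$. The divisor computation for $\Psi=\T_n+\U_{n-2}\sqrt{\HH_4}$ is the standard Pell-on-a-curve argument, and the torsion condition $2n\varrho\in\Lambda$ is exactly \eqref{Rho}. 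In the converse, the only spots that deserve one more sentence are bookkeeping: when you rescale to $\Psi\cdot\sigma^\ast\Psi=1$ you are silently extracting a square root of a nonzero constant, which is fine; and when you read off $\deg\T_n=n$, $\deg\U_{n-2}=n-2$, it is worth saying explicitly that at $u=\varrho$ one of $\Psi,\ov\Psi$ has a pole of order $n$ while the other has a zero of order $n$, so no cancellation can occur in $\tfrac12(\Psi\pm\ov\Psi)$. Compared with the cited source, your route gives a cleaner ``iff'' with no formulas, while the explicit theta representation in \cite{PehSch-2004} delivers the actual polynomials one would need for computation.
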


%----- Text -----%

In Fig.\,\ref{Fig_P}, the set $\p$, defined in \eqref{P}, and all points $\varrho\in\p$ of the form \eqref{Rho} (where $n=6$ was chosen) are illustrated.

%----- Figure: Set P -----%

\begin{figure}[ht]
\begin{center}
\includegraphics[scale=0.7]{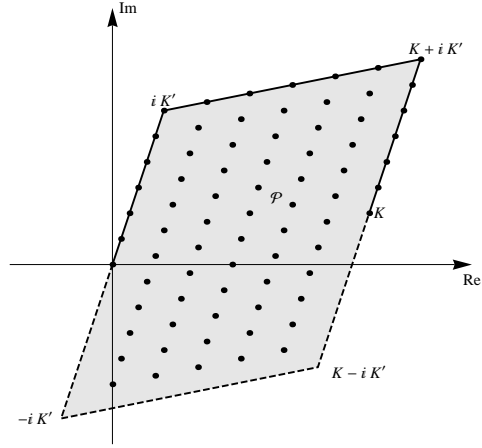}
\caption{\label{Fig_P} Illustration of the parallelogram $\p$}
\end{center}
\end{figure}

If \eqref{Rho} holds, i.e.\ if $(a_1,a_2,a_3,a_4)$ is a $\tn$-tuple, then the corresponding polynomials $\T_n(z)$, $\U_{n-2}(z)$, and $\HH_4(z)$ of Theorem\,\ref{Thm-TwoArcs} can be represented with a certain conformal mapping and with the help of Jacobi's elliptic and theta functions, for details we refer to \cite{PehSch-2004}.

Before we start with the proof of Theorem\,\ref{MainThm}, let us state an inequality for the elliptic sine function, which will be crucial for the proof.

%----- Lemma: Inequality for |sn(u)| -----%

\begin{lemma}\label{lem}
Let $k\in{D}_k$, and $u\in\C$, $|u|\leq\tfrac{\pi}{4}$. Then
\[
|\sn(u)|\leq\tan|u|\leq\tfrac{4}{\pi}|u|.
\]
\end{lemma}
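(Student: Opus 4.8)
The plan is to prove the two inequalities separately, the second being completely elementary and the first following from a power–series comparison. For the right‑hand inequality $\tan|u|\leq\frac{4}{\pi}|u|$: set $t:=|u|\in[0,\pi/4]$ and observe that $g(t):=\tan t-\frac{4}{\pi}t$ satisfies $g(0)=0$, $g(\pi/4)=1-1=0$, and $g$ is convex on $[0,\pi/4)$ since $g''(t)=2\sec^2 t\tan t\geq 0$ there; a convex function vanishing at both endpoints of an interval is nonpositive on that interval, which gives $\tan t\leq\frac{4}{\pi}t$ on $[0,\pi/4]$. (Equivalently one just notes $\tan t/t$ is increasing on $(0,\pi/4]$ and equals $4/\pi$ at the right endpoint.)

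For the left‑hand inequality $|\sn(u)|\leq\tan|u|$ I would use the Maclaurin expansion of $\sn(u,k)$ in powers of $u$. Writing $\sn(u,k)=\sum_{j\geq 0}c_{2j+1}(k)\,u^{2j+1}$, the key structural fact (see \cite{Lawden} or \cite{BF}) is that for $k\in D_k$ one has $|c_{2j+1}(k)|\leq \tilde c_{2j+1}$, where $\tilde c_{2j+1}$ are the (nonnegative) Maclaurin coefficients of $\tan u$; indeed, because $0<|k|\le 1$, each coefficient $c_{2j+1}(k)$ is a polynomial in $k^2$ with nonnegative integer coefficients whose value at $k^2=1$ is exactly the corresponding coefficient of $\tan u$ — this uses $\sn(u,1)=\tanh u$ together with the fact that replacing each $k^2$ by its modulus and then by $1$ can only increase the absolute value of every coefficient. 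Granting this, for $|u|\leq\pi/4$ (which is inside the disk of convergence of $\tan$, whose nearest singularity is at $\pm\pi/2$) we get
\[
|\sn(u)|\leq\sum_{j\geq 0}|c_{2j+1}(k)|\,|u|^{2j+1}\leq\sum_{j\geq 0}\tilde c_{2j+1}\,|u|^{2j+1}=\tan|u|,
\]
the last equality because all $\tilde c_{2j+1}\geq 0$ so the series equals $\tan$ evaluated at the real number $|u|$.

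The main obstacle is making the coefficient comparison $|c_{2j+1}(k)|\leq\tilde c_{2j+1}$ rigorous and uniform over $k\in D_k$. One clean route: start from the differential equation $\sn'=\cn\,\dn$ together with $(\sn')^2=(1-\sn^2)(1-k^2\sn^2)$, which yields $\sn''=-(1+k^2)\sn+2k^2\sn^3$; feeding in the ansatz $\sn(u)=\sum c_{2j+1}u^{2j+1}$ gives a recursion expressing $c_{2j+1}$ as a sum, with positive rational coefficients, of products of lower $c$'s multiplied by $1+k^2$ or $k^2$. An easy induction then shows each $c_{2j+1}(k)$ is a polynomial in $k^2$ with nonnegative coefficients, so $|c_{2j+1}(k)|\leq c_{2j+1}(|k^2|^{1/2}\text{-substituted})\leq c_{2j+1}(1)$; and $c_{2j+1}(1)=\tilde c_{2j+1}$ follows by running the same recursion at $k=1$, where $\sn(u,1)=\tanh u$, and noting $\tanh(\ii u)=\ii\tan u$ forces the coefficients of $\tanh$ and $\tan$ to agree in absolute value. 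A subtlety to handle with one line is that $D_k$ allows $|k|=1$ with $k\neq 1$ (e.g. $k=\ii$), but since the bound only involves $|k^2|\le 1$ the argument is unaffected; one should also remark that the radius of convergence of the $\sn$ series is at least that of $\tan$, namely $\pi/2>\pi/4$, so all manipulations on $|u|\le\pi/4$ are justified.
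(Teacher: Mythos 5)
Your handling of the second inequality coincides with the paper's: the author also notes $f(0)=f(\tfrac{\pi}{4})=0$ and $f''(x)=2\sin x/\cos^3x>0$ and concludes by convexity. For the first inequality the paper offers no argument at all --- it cites a more general result proved elsewhere --- so your power-series comparison is a genuinely different, self-contained route, and it is essentially the right one. What it buys is independence from the external reference; what it costs is that the structural claim it rests on is stated incorrectly, and as written the induction would fail.

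The error: the Maclaurin coefficients of $\sn(u,k)$ are \emph{not} polynomials in $k^2$ with nonnegative coefficients. They alternate in sign,
\[
\sn(u,k)=u-(1+k^2)\tfrac{u^3}{3!}+(1+14k^2+k^4)\tfrac{u^5}{5!}-(1+135k^2+135k^4+k^6)\tfrac{u^7}{7!}+\cdots,
\]
so already $c_3(k)=-(1+k^2)/6$. Accordingly, the recursion you derive from $\sn''=-(1+k^2)\sn+2k^2\sn^3$ carries a minus sign on the linear term and cannot produce ``nonnegative coefficients'' by induction. The repair is standard: substitute $c_{2j+1}=(-1)^jP_j(k^2)$; the sign $(-1)^{j}$ from the linear term and the sign $(-1)^{j_1+j_2+j_3}$ emerging from the cubic convolution then align, and the same induction shows each $P_j$ is a polynomial with nonnegative coefficients. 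With that correction the chain $|c_{2j+1}(k)|=|P_j(k^2)|\le P_j(|k^2|)\le P_j(1)=\tilde c_{2j+1}$ is valid (the last equality from $\sn(u,1)=\tanh u$ and $\tanh(\ii u)=\ii\tan u$, as you observe), and the remainder of your argument --- termwise domination by the $\tan$ series for $|u|\le\tfrac{\pi}{4}<\tfrac{\pi}{2}$, which simultaneously guarantees the radius of convergence exceeds $\tfrac{\pi}{4}$ --- goes through. Two small further points: the coefficients are rational, not integral, unless you first clear the factor $1/(2j+1)!$; and the evaluation ``at $k^2=1$'' should be phrased as a bound on absolute values, since $k=1$ itself is excluded from $D_k$.
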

\begin{proof}
A more general version of the first inequality is proved in \cite{Sch-2012b}. The second inequality follows immediately from $f(x):=\tan{x}-\frac{4}{\pi}x\leq0$, $0\leq{x}\leq\frac{\pi}{4}$, since $f(0)=f(\frac{\pi}{4})=0$ and $f''(x)=2\sin{x}/\cos^3x>0$ for $0\leq{x}\leq\frac{\pi}{4}$.
\end{proof}

%----- Proof of the main theorem -----%

\begin{proof}[{\bf Proof of Theorem\,\ref{MainThm}}]
Let $n\in\N$ and let us assume that $a_1,a_2,a_3,a_4$ satisfy inequality \eqref{ineq-a1a2a3a4} (which is always possible by a reordering of the $a_j$). Let $k$ be defined by \eqref{k}, let $\varrho\in{\cal{P}}$ be defined by equation \eqref{sn} and let $\lambda,\lambda'\in\R$ be uniquely defined by
\begin{equation}\label{rho}
\varrho=\lambda{K}+\ii\lambda'K'.
\end{equation}
By \eqref{k}, \eqref{k'}, and \eqref{sn},
\begin{equation}\label{a2}
a_2=a_4-(a_4-a_1)\sn^2(\varrho)
\end{equation}
and
\begin{equation}\label{a3}
a_3=\frac{a_4-{k'}^2a_1}{k^2}\cdot\frac{a_2+\dfrac{k^2a_1a_4}{{k'}^2a_1-a_4}}{a_2+\dfrac{{k'}^2a_4-a_1}{k^2}}=A_1\cdot\frac{a_2+A_2}{a_2+A_3},
\end{equation}
where
\begin{equation}\label{A1A2A3}
A_1:=\frac{a_4-{k'}^2a_1}{k^2}, \quad A_2:=\frac{k^2a_1a_4}{{k'}^2a_1-a_4}, \quad A_3:=\dfrac{{k'}^2a_4-a_1}{k^2}.
\end{equation}
Clearly there exist integers $m,m'\in\Z$ such that
\begin{equation}\label{mn}
\Bigl|\frac{m}{n}-\lambda\Bigr|\leq\frac{1}{n}\qquad\text{and}\qquad\Bigl|\frac{m'}{n}-\lambda'\Bigr|\leq\frac{1}{n}.
\end{equation}
Let us remark that the integers $m,m'$ can be chosen such that (cf.\ Fig.\,\ref{Fig_P})
\begin{equation}\label{rhopmtrho}
\varrho\pm\trho\notin\{\nu{K}+\ii\,\nu'K':\nu,\nu'\in\Z\},
\end{equation}
where $\trho$ is defined by
\begin{equation}\label{rho-t}
\trho:=\tfrac{m}{n}\,K+\ii\,\tfrac{m'}{n}K'.
\end{equation}
Note that \eqref{rhopmtrho} implies that none of the points $\varrho\pm\trho$ or $\tfrac{1}{2}(\varrho\pm\trho)$ is a pole of $\sn(u)$, $\cn(u)$, or $\dn(u)$. Define
\begin{equation}\label{a2-t}
\ta_2:=a_4-(a_4-a_1)\sn^2(\trho)
\end{equation}
and
\begin{equation}\label{a3-t}
\ta_3:=\frac{a_4-{k'}^2a_1}{k^2}\cdot\frac{\ta_2+\dfrac{k^2a_1a_4}{{k'}^2a_1-a_4}}{\ta_2+\dfrac{{k'}^2a_4-a_1}{k^2}}
=A_1\cdot\frac{\ta_2+A_2}{\ta_2+A_3},
\end{equation}
where the last equality follows from \eqref{A1A2A3}. Then
\begin{equation}\label{k-t}
k^2=\frac{(a_4-a_1)(\ta_3-\ta_2)}{(a_4-\ta_2)(\ta_3-a_1)}
\end{equation}
and
\begin{equation}\label{sn-t}
\sn^2(\trho)=\frac{a_4-\ta_2}{a_4-a_1}.
\end{equation}
holds. Note that the modulus $k$ for the tuples $(a_1,a_2,a_3,a_4)$ and $(a_1,\ta_2,\ta_3,a_4)$ is the same. By \eqref{rho-t}, \eqref{k-t}, \eqref{sn-t} and Theorem\,\ref{ChThm}, $(a_1,\ta_2,\ta_3,a_4)$ is a $\tn$-tuple. By \eqref{a2} and \eqref{a2-t},
\begin{align*}
&|a_2-\ta_2|=|a_4-a_1|\cdot|\sn^2(\varrho)-\sn^2(\trho)|\\
&\quad=|a_4-a_1|\cdot|\sn(\varrho)-\sn(\trho)|\cdot|\sn(\varrho)+\sn(\trho)|\\
&\quad=|a_4-a_1|\cdot|\sn(\varrho)+\sn(\trho)|\cdot
\frac{|2\sn(\frac{1}{2}\varrho-\frac{1}{2}\trho)\cn(\frac{1}{2}\varrho+\frac{1}{2}\trho)\dn(\frac{1}{2}\varrho+\frac{1}{2}\trho)|}
{|1-k^2\sn^2(\frac{1}{2}\varrho+\frac{1}{2}\trho)\sn^2(\frac{1}{2}\varrho-\frac{1}{2}\trho)|},
\end{align*}
where in the last equation the well-known formula for $\sn(u)-\sn(v)$ is used, see, e.g., \cite[(123.06)]{BF}. By \eqref{rho}, \eqref{mn}, and \eqref{rho-t},
\begin{align}
|\varrho-\trho|&=|(\lambda-\tfrac{m}{n})K+\ii(\lambda'-\tfrac{m'}{n})K'|\notag\\
&\leq|\lambda-\tfrac{m}{n}|\cdot|K|+|\lambda'-\tfrac{m'}{n}|\cdot|K'|\notag\\
&\leq\frac{|K|+|K'|}{n}.\label{KKn}
\end{align}
If
\begin{equation}
n\geq\tfrac{2}{\pi}(|K|+|K'|)=:n_1
\end{equation}
then, by \eqref{KKn},
\begin{equation}\label{rho-trho}
|\tfrac{1}{2}\varrho-\tfrac{1}{2}\trho|\leq\frac{\pi}{4}.
\end{equation}
Thus, using Lemma\,\ref{lem} and \eqref{KKn},
\[
|\sn(\tfrac{1}{2}\varrho-\tfrac{1}{2}\trho)|\leq\tfrac{4}{\pi}|\tfrac{1}{2}\varrho-\tfrac{1}{2}\trho|
\leq\tfrac{2}{\pi}(|K|+|K'|)\tfrac{1}{n}.
\]
Summing up, for $n\geq{n}_1$, we have the inequality
\[
|a_2-\ta_2|\leq\frac{2}{n\pi}(|K|+|K'|)|a_4-a_1|B.
\]
where
\begin{equation}\label{B}
B:=|\sn(\varrho)+\sn(\trho)|\cdot
\frac{|2\cn(\frac{1}{2}\varrho+\frac{1}{2}\trho)\dn(\frac{1}{2}\varrho+\frac{1}{2}\trho)|}
{|1-k^2\sn^2(\frac{1}{2}\varrho+\frac{1}{2}\trho)\sn^2(\frac{1}{2}\varrho-\frac{1}{2}\trho)|}
\end{equation}
Since $\varrho\notin\{0,K,\pm\iK,K\pm\iK\}$, there exists an $n_2\in\N$ such that $0,K,\pm\iK,K\pm\iK\notin\p(n_2)$, where
\[
\p(n_2):=\left\{u\in\C:u=\mu{K}+\ii\mu'K',\,|\mu-\lambda|\leq\frac{1}{n_2},\,|\mu'-\lambda'|\leq\frac{1}{n_2},\,\mu,\mu'\in\R\right\}.
\]
Thus, the maxima
\[
s^*:=\max_{u\in\p(n_2)}|\sn(u)|, \qquad c^*:=\max_{u\in\p(n_2)}|\cn(u)|, \qquad d^*:=\max_{u\in\p(n_2)}|\dn(u)|
\]
exist. By construction of $\p(n_2)$, for $n\geq{n}_2$, we have $\varrho,\trho,\tfrac{1}{2}(\varrho+\trho)\in\p(n_2)$. Further, we have
\[
\left|k^2\right|\cdot\left|\sn^2(\tfrac{1}{2}\varrho+\tfrac{1}{2}\trho)\right|\cdot\left|\sn^2(\tfrac{1}{2}\varrho-\tfrac{1}{2}\trho)\right|
\leq(s^*)^2\cdot\frac{4}{\pi^2}(|K|+|K'|)^2\cdot\frac{1}{n^2}\leq\frac{1}{2},
\]
where the last inequality is true if
\[
n\geq n_3:=\frac{2\sqrt{2}\,s^*}{\pi}(|K|+|K'|).
\]
Hence, for $B$ defined in \eqref{B},
\[
B\leq\frac{4s^*c^*d^*}
{\left|1-\left|k^2\right|\cdot\left|\sn^2(\tfrac{1}{2}\varrho+\tfrac{1}{2}\trho)\right|
\cdot\left|\sn^2(\tfrac{1}{2}\varrho-\tfrac{1}{2}\trho)\right|\right|}
\leq8s^*c^*d^*
\]
and altogether, for $n\geq\max\{n_1,n_2,n_3\}$, we get the inequality
\[
|a_2-\ta_2|\leq\frac{C_1}{n},
\]
where
\begin{equation}\label{C1}
C_1:=\frac{16}{\pi}\left(|K|+|K'|\right)|a_4-a_1|\,s^*c^*d^*.
\end{equation}
Using \eqref{a3} and \eqref{a3-t}, we get
\[
|a_3-\ta_3|=|A_1|\cdot\frac{|a_2-\ta_2|\cdot|A_2-A_3|}{|a_2+A_3|\cdot|\ta_2+A_3|}
\]
and
\[
|\ta_2+A_3|=|a_4-a_1|\cdot|\sn^2(\trho)-1/k^2|.
\]
Since $u=K+\iK\in\p$ is the only point in $\p$, for which $\sn^2(u)=1/k^2$, by construction of $\p(n_2)$,
\[
s^{**}:=\min_{u\in\p(n_2)}|\sn^2(u)-1/k^2|>0
\]
holds. Thus
\[
|a_3-\ta_3|\leq\frac{C_2}{n},
\]
where
\begin{equation}\label{C2}
C_2:=\frac{C_1\cdot|A_1|\cdot|A_2-A_3|}{|a_2+A_3|\cdot|a_4-a_1|\cdot{s}^{**}}
\end{equation}
and $A_1,A_2,A_3$ are defined in \eqref{A1A2A3}.
\end{proof}

%----- Bibliography -----%

\bibliographystyle{amsplain}
\bibliography{DensityTwoArcs}

\end{document}